\documentclass{amsart}
\usepackage{amsmath}
\usepackage{amssymb}
\usepackage{amsthm}
\usepackage{enumerate}
\usepackage[pdftex]{graphicx}
\usepackage{caption}
\theoremstyle{definition}
\newtheorem{definition}{Definition}[section]
\theoremstyle{plain}
\newtheorem{lemma}[definition]{Lemma}
\newtheorem{theorem}[definition]{Theorem}

\newtheorem{proposition}[definition]{Proposition}

\theoremstyle{remark}
\newtheorem{remark}[definition]{Remark}
\newtheorem{notation}[definition]{Notation}

\makeatletter
\@namedef{subjclassname@2020}{
  \textup{2020} Mathematics Subject Classification}
\makeatother

\newcommand{\mycl}{\operatorname{cl}}
\newcommand{\mystd}{\operatorname{Std}}

\begin{document}
\title[Affiness of one-dimensional definable topological space]{Affiness of topological space definable in a definably complete uniformly locally o-minimal structure of the second kind}
\author[M. Fujita]{Masato Fujita}
\address{Department of Liberal Arts,
Japan Coast Guard Academy,
5-1 Wakaba-cho, Kure, Hiroshima 737-8512, Japan}
\email{fujita.masato.p34@kyoto-u.jp}

\author[T. Kawakami]{Tomohiro Kawakami}
\address{Department of Mathematics,
Wakayama University,
Wakayama, 640-8510, Japan}
\email{kawa0726@gmail.com}

\begin{abstract}
We give a necessary and sufficient condition for a one-dimensional regular and Hausdorff topological space definable in a definably complete uniformly locally o-minimal structure of the second kind having definable bounded multiplication being affine.
\end{abstract}

\subjclass[2020]{Primary 03C64}

\keywords{uniformly locally o-minimal structures of the second kind; definable bounded multiplication}

\maketitle

\section{Introduction}\label{sec:intro}
Metric spaces with metrics definable in o-minimal structures and topological spaces with topologies definable in o-minimal structures have been investigated in \cite{PR,S,V,W}.
Y. Peterzil and A. Rosel gave a necessary and sufficient condition for a one-dimensional topological space with a topology definable in an o-minimal structure being affine in \cite{PR} when the definable set in consideration is bounded.
We tackle the same problem in a different setting.

Definable cell decomposition theorem \cite[Chapter 3, Theorem 2.11]{vdD} is a very useful tool in tackling problems on o-minimal structures.
A locally o-minimal structure \cite{TV} which is defined by simply localizing the definition of o-minimal structures does not admit local definable cell decomposition.
The first author proposed a uniformly locally o-minimal structure of the second kind in \cite{Fuji}, which admits local definable cell decomposition when it is definably complete \cite[Theorem 4.2]{Fuji}. 
A definably complete uniformly locally o-minimal expansion of an ordered abelian group of the second kind is simply called a DCULOAS structure. 
We consider it in place of an o-minimal structure in this paper.

The proof of the implication (4)$\Rightarrow$(1) in \cite[Theorem 4.1]{PR} uses the assumption that arbitrary two closed intervals are definably homeomorphic to each other unless this assumption is not made in the statement.
It is true when the structure is an expansion of an ordered field.
The authors proposed the notion of definable bounded multiplication in \cite{FKK}.
We demonstrated that several assertions which are valid in an o-minimal expansion of an ordered field also hold true in a definably complete locally o-minimal structure having definable bounded multiplication with the extra assumption that the relevant spaces are definably compact in \cite{FKK}.
In this paper, we also employ the additional assumption that the structure has definable bounded multiplication in order to obtain definable homeomorphism between given two closed intervals.

In summary, we consider a topological space with topology definable in a DCULOAS structure having definable bounded multiplication, and find a necessary and sufficient condition for a one-dimensional definable topological space being affine under the assumption that the definable set is bounded.

We next review basic definitions.
\begin{definition}
A densely linearly ordered structure $\mathcal M=(M,<,\ldots)$ is \textit{definably complete} if every definable subset of $M$ has both a supremum and an infimum in $M \cup \{ \pm \infty\}$.

A densely linearly ordered structure $\mathcal M=(M,<,\ldots)$ is \textit{locally o-minimal} if, for every definable subset $X$ of $M$ and for every point $a\in M$, there exists an open interval $I$ containing the point $a$ such that $X \cap I$ is  a finite union of points and open intervals.

A locally o-minimal structure $\mathcal M=(M,<,\ldots)$ is a \textit{uniformly locally o-minimal structure of the second kind} if, for any positive integer $n$, any definable set $X \subset M^{n+1}$, $a \in M$ and $b \in M^n$, there exist an open interval $I$ containing the point $a$ and an open box $B$ containing $b$ such that the definable sets $X_y \cap I$ are finite unions of points and open intervals for all $y \in B$.
\end{definition}

\begin{definition}
An expansion $\mathcal M=(M,<,0,+,\ldots)$ of a densely linearly ordered abelian group has \textit{definable bounded multiplication compatible to $+$} if there exist an element $1 \in M$ and a map $\cdot:M \times M \rightarrow M$ such that
\begin{enumerate}
\item[(1)] the tuple $(M,<,0,1,+,\cdot)$ is an ordered field;
\item[(2)] for any bounded open interval, the restriction $\cdot|_{I \times I}$ of the product $\cdot$ to $I \times I$ is definable in $\mathcal M$.
\end{enumerate}
We simply say that $\mathcal M$ has \textit{definable bounded multiplication} when the addition in consideration is clear from the context. 
The ordered field $(M,<,0,1,+,\cdot)$ is an ordered real closed field by \cite[Proposition 3.3]{FKK}.
\end{definition}

Here is the definition of definable topology.
\begin{definition}
Consider an expansion of a dense linear order and a definable set $X$.
A topology $\tau$ on $X$ is \textit{definable} when $\tau$ has a basis of the form $\{B_y \subseteq X\}_{y \in Y}$, where $\bigcup_{y \in Y} \{y\} \times B_y$ is definable.
 We call the family $\{B_y\}_{y \in Y}$ a \textit{definable basis} of $\tau$.
 The pair $(X,\tau)$ of a definable set and a definable topology on it is called a \textit{definable topological space}.
 
 Since $X$ is a subset of a Cartesian product $M^n$, $X$ has the topology induced from the product topology of $M^n$.
 It is definable and called the \textit{affine topology}.
 The notation $\tau^{\text{af}}$ denotes the affine topology.
\end{definition}

We are now ready to introduce our main result.
Our target in this paper is to demonstrate the following theorem:
\begin{theorem}\label{thm:one-dim_top}
Consider a DCULOAS structure $\mathcal M=(M;<,+,0,\ldots)$ having definable bounded multiplication.
Let $X$ be a definable bounded subset of $M^n$ of dimension one.
Let $\tau$ be a definable topology on $X$ which is Hausdorff and regular.

The following are equivalent:
\begin{enumerate}
\item[(1)] The definable topological space $(X,\tau)$ is definably homeomorphic to a definable subset of $M^k$ with its affine topology for some $k$.
\item[(2)] There is a definable $\tau$-closed and $\tau$-discrete subset $G$ of $X$ at most of dimension zero satisfying the following conditions:
\begin{enumerate}
\item[(i)] The restriction of $\tau$ to $X \setminus G$ coincides with the affine topology on $X \setminus G$;
\item[(ii)] There exists a positive integer $K$ such that, for any $x \in G$ and a definable $\tau$-open neighborhood $U$ of $x$, we can find a definable $\tau$-open neighborhood $V$ of $x$ contained in $U$ such that $V \setminus \{x\}$ has at most $K$ $\tau^{\text{af}}$-definably connected components.
\end{enumerate}
\end{enumerate}
\end{theorem}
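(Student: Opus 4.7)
The plan is to follow the strategy of Peterzil--Rosel \cite{PR} adapted to the DCULOAS setting; the two enabling tools are local definable cell decomposition (\cite[Theorem 4.2]{Fuji}) and definable bounded multiplication, the latter supplying definable scaling maps between bounded closed intervals that elsewhere would come from the ambient field structure.

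For $(1)\Rightarrow(2)$, I would transport everything via the given homeomorphism so that, without loss of generality, $(X,\tau)$ sits inside $M^k$ carrying its affine topology. Local cell decomposition at each $p \in X$ exhibits a neighborhood of $p$ as a finite disjoint union of $0$-cells and open $1$-cells; call $p$ \emph{bad} if it is a $0$-cell of the decomposition or a boundary endpoint of one of the $1$-cells, and set $G$ to be the set of bad points. A dimension argument gives $\dim G \le 0$, and $G$ is $\tau$-closed and $\tau$-discrete. Condition (i) is then immediate, and for condition (ii) only finitely many arc-germs emanate from each $p \in G$, so a small $V \ni p$ has $V\setminus\{p\}$ with finitely many $\tau^{\text{af}}$-definably connected components. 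A uniform bound $K$ across $G$ is extracted by applying uniform local o-minimality of the second kind with $p$ as the parameter.

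For $(2)\Rightarrow(1)$, the hard direction, the plan is to construct a definable embedding $\Phi \colon X \to M^{n+K}$ of the form $\Phi(x) = (x,\lambda_1(x),\ldots,\lambda_K(x))$, where the extra coordinates are designed to separate the at-most-$K$ affine branches meeting at each point of $G$. Concretely, for each $x_0 \in G$ fix a $\tau$-open neighborhood $V_{x_0}$ supplied by condition (ii), enumerate the $\tau^{\text{af}}$-definably connected components $C_1(x_0),\ldots,C_{k(x_0)}(x_0)$ of $V_{x_0}\setminus\{x_0\}$ with $k(x_0)\le K$, and arrange $\lambda_i(x)\to 0$ as $x\to x_0$ within $C_i(x_0)$ while $\lambda_j$ stays bounded away from $0$ on $C_i(x_0)$ for $j\ne i$; on $X\setminus\bigcup_{x_0\in G}V_{x_0}$ set the $\lambda_i$ to fixed nonzero constants. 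Definable bounded multiplication provides the scaling maps used to define each $\lambda_i$ on the bounded arcs $C_i(x_0)$. One then checks that $\Phi$ is a definable $\tau$-continuous injection with continuous inverse; the only nontrivial point is continuity of $\Phi^{-1}$ at $\Phi(x_0)$ for $x_0 \in G$, which holds because $\tau$-convergence to $x_0$ is detected precisely by some $\lambda_i \to 0$ along the branch $C_i(x_0)$.

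The crux is this reverse direction and, within it, the uniform definability of the auxiliary data: (a) enumerating the branches $C_i(x_0)$ definably uniformly in $x_0 \in G$, (b) selecting the family $\{V_{x_0}\}_{x_0 \in G}$ as a single definable family, and (c) globally defining the $\lambda_i$'s so that the prescribed asymptotic behavior holds simultaneously at every point of $G$. Parts (a) and (b) rest on uniform local o-minimality of the second kind together with the closed, discrete, $0$-dimensional nature of $G$; part (c) is where definable bounded multiplication does essential work, since it yields definable homeomorphisms between any two bounded closed intervals that can be assembled into the required scaling functions on the branches. Once these ingredients are in place, the remaining verifications are local and follow directly from the cell-decomposition picture.
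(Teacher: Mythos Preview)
Your plan for $(2)\Rightarrow(1)$ has a genuine gap at its starting point. You propose an embedding $\Phi(x)=(x,\lambda_1(x),\ldots,\lambda_K(x))$ whose first $n$ coordinates are the identity on $X$. For $\Phi$ to be $(\tau,\tau^{\text{af}})$-continuous at $x_0\in G$, every $\tau$-convergent net $x\to x_0$ must in particular satisfy $x\to x_0$ in the first $n$ coordinates, i.e.\ in $\tau^{\text{af}}$. Hypothesis~(2) does not give this: the $\tau^{\text{af}}$-connected components $C_i(x_0)$ of $V_{x_0}\setminus\{x_0\}$ are $\tau$-close to $x_0$ but may be affine-far from it. A concrete model: take $X=(0,1)\cup(2,3)\cup\{4\}\subseteq M$, let $\tau$ agree with $\tau^{\text{af}}$ on $(0,1)\cup(2,3)$, and declare the basic $\tau$-neighborhoods of $4$ to be $\{4\}\cup(0,\varepsilon)\cup(2,2+\varepsilon)$. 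This is Hausdorff and regular, satisfies (2) with $G=\{4\}$ and $K=2$, and is definably homeomorphic to an open interval; yet no map whose first coordinate is the identity can be $\tau$-continuous at $4$, since points of $(0,\varepsilon)$ are $\tau$-near $4$ but affine-near $0$. The paper confronts exactly this phenomenon: it works not with the $\tau$-neighborhoods $V_{x_0}$ but with the affine arcs $C_{i,z}$ produced by Lemma~\ref{lem:base}, assigns to each such arc its (unique, by the paper's Claim~3) \emph{$\tau$-connection point} $\zeta_i(z)\in G$, and then physically reroutes a portion of the arc via an explicit piecewise-linear ``standard connection'' so that in the target it lands at $\zeta_i(z)$ rather than at $z$. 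This rerouting is why the target sits in $M^{n(1+KN)}$ rather than $M^{n+K}$, and is the substantive content of the construction; your separating coordinates $\lambda_i$ alone cannot do this job.

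There is also a smaller problem in $(1)\Rightarrow(2)$: you cannot ``without loss of generality'' assume $(X,\tau)$ already carries its affine topology, because condition~(2) compares $\tau$ with the affine topology of the \emph{given} embedding $X\subseteq M^n$, and that comparison is not a definable-homeomorphism invariant (indeed, if $\tau=\tau^{\text{af}}$ then $G=\emptyset$ works trivially, which proves nothing about the original $X$). The paper keeps $X$ fixed and takes $G$ to be the $\tau^{\text{af}}$-discontinuity locus of the given homeomorphism $f\colon X\to Y$; the uniform bound $K$ in~(ii) then comes from applying Lemma~\ref{lem:base} to $Y$ and $f(G)$ and pulling back, rather than from an unadorned appeal to second-kind uniformity.
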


Section \ref{sec:preliminary} is a preliminary section.
We prepare several lemmas necessary for the proof of our main theorem.
Section \ref{sec:proof} is the main body of the paper and devoted to the proof of Theorem \ref{thm:one-dim_top}.
Section \ref{sec:appendix} is an appendix.
Peterzil and Rosel's result \cite[Main Theorem]{PR} assumes that the definable set $X$ is bounded.
We extend their result to the non-bounded case in this section.

In the last of this section, we summarize the notations and terms used in this paper.
The term `definable' means `definable in the given structure with parameters' in this paper.
For a linearly ordered structure $\mathcal M=(M,<,\ldots)$, an open interval is a definable set of the form $\{x \in M\;|\; a < x < b\}$ for some $a,b \in M$.
It is denoted by $(a,b)$ in this paper.
We define a closed interval similarly. 
It is denoted by $[a,b]$.
An open box in $M^n$ is the direct product of $n$ open intervals.
Let $A$ be a subset of a topological space.
We call that $A$ is $\tau$-open if it is open under the topology $\tau$.
We also define $\tau$-closed and $\tau$-frontier etc. in the same manner.
The notations $\mycl(A)$ and $\partial A$ denote the closure and the frontier of the set $A$, respectively.
We denote $\mycl^{\tau}(A)$ and $\partial^{\tau} A$ when we emphasize the topology $\tau$.

\section{Preliminary}\label{sec:preliminary}

We prove three lemmas in this section.
We crucially use the local definable cell decomposition theorem in proving the second lemma.  
\begin{lemma}\label{lem:limit}
Consider a DCULOAS structure $\mathcal M=(M;<,+,0,\ldots)$.
Let $\gamma:(0,u] \rightarrow M^n$ be a definable continuous map having the bounded image.
There exists a unique point $x \in M^n$ such that, for any open box containing the point $x$, the intersection $\gamma((0,u]) \cap B$ is not empty.
We denote this point by $\lim_{t \to 0}\gamma(t)$.

In particular, if $C$ is a bounded definable cell of dimension one, the frontier of $C$ consists of two points.
\end{lemma}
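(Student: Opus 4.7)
The plan is to reduce to a coordinate-wise analysis and invoke definable completeness. Writing $\gamma(t) = (\gamma_1(t),\ldots,\gamma_n(t))$ with each $\gamma_i : (0,u] \to M$ definable and continuous, I would assemble the desired point $x$ from the limits of the individual $\gamma_i$'s, which are extracted via local cell decomposition.

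First I would invoke the local definable cell decomposition theorem \cite[Theorem 4.2]{Fuji} at $t = 0$, applied compatibly with the graphs of $\gamma_1,\ldots,\gamma_n$. This yields some $\epsilon \in (0,u]$ such that on $(0,\epsilon)$ every $\gamma_i$ is continuous and either constant or strictly monotone. Boundedness of the image of $\gamma$ then makes each $\gamma_i((0,\epsilon))$ a bounded definable subset of $M$, so its infimum $\alpha_i$ and supremum $\beta_i$ lie in $M$ by definable completeness. Define $x_i$ as $\alpha_i$, $\beta_i$, or the common value, according to whether $\gamma_i$ is decreasing, increasing, or constant on $(0,\epsilon)$. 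Monotonicity then yields the order-convergence $\gamma_i(t) \to x_i$ as $t \to 0^+$: for every open interval $J_i \ni x_i$ there exists $\delta_i > 0$ with $\gamma_i((0,\delta_i)) \subseteq J_i$.

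Setting $x := (x_1,\ldots,x_n)$, any open box $B = \prod_i J_i$ containing $x$ satisfies $\gamma((0,\delta)) \subseteq B$ for $\delta := \min_i \delta_i > 0$, and in particular $\gamma((0,u]) \cap B \neq \emptyset$; this handles existence. For uniqueness, suppose $x' \neq x$ also enjoys the stated property. Some coordinate $x'_j$ differs from $x_j$, and separating them by disjoint open intervals yields a small open box around $x'$ that is disjoint from $\{\gamma(t) : t \in (0,\delta)\}$ for all sufficiently small $\delta$, contradicting the defining property of $x'$ (read, as the notation $\lim_{t\to 0}\gamma(t)$ suggests, as persisting for arbitrarily small $\delta$).

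For the ``in particular'' clause, a bounded one-dimensional definable cell $C \subseteq M^n$ is the image of a continuous definable parametrization on a bounded open interval $(a,b) \subseteq M$ with bounded image. Its frontier is then the set of the two endpoint limits, each of which exists and is unique by the main part, after translating the endpoint in question to $0$ and reversing orientation at the right endpoint; hence $\partial C$ contains at most two points. The only nontrivial step in the whole argument is the initial appeal to local cell decomposition to secure simultaneous monotonicity of all the $\gamma_i$ on one common subinterval $(0,\epsilon)$; thereafter the reasoning is routine order-theoretic bookkeeping.
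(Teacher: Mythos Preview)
Your approach is correct and takes a genuinely different route from the paper's. You work coordinate-wise: local monotonicity of each $\gamma_i$ near $0$ together with definable completeness produces each $x_i$, and you assemble $x=(x_1,\ldots,x_n)$, in fact establishing the stronger convergence $\gamma(t)\to x$ as $t\to 0^+$. The paper argues topologically instead: $D=\gamma((0,u])$ is not closed, so $\partial D\neq\emptyset$; since each $\gamma([t,u])$ is closed by \cite[Proposition~1.10]{M}, one has $\partial D\subseteq\bigcap_{0<t<u}\mycl(\gamma((0,t)))$; assuming two distinct frontier points, one separates them by disjoint boxes whose $\gamma$-preimages, by local o-minimality, must each contain an interval $(0,\delta)$, a contradiction. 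Your route is more constructive and avoids the external citation to Miller; the paper's is coordinate-free and identifies the limit directly with the frontier of the image. One quibble: the cell decomposition of \cite[Theorem~4.2]{Fuji} is stated for a box around a chosen point of $M^{n+1}$ and delivers continuity on cells, not monotonicity as such, so invoking it ``at $t=0$ compatibly with the graphs'' is not quite the right citation; what you actually need is the more elementary local monotonicity theorem for definable unary functions, or equivalently the direct argument that applies local o-minimality to each level set $\gamma_i^{-1}((c,\infty))$ near $0$ to force $\limsup_{t\to 0}\gamma_i(t)=\liminf_{t\to 0}\gamma_i(t)$.
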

\begin{proof}
The image $D=\gamma((0,u])$ is not closed because $\gamma$ is continuous and $(0,u]$ is not closed.
The frontier $\partial D$ of $D$ is not empty.
We have only to demonstrate that $\partial D$ is a singleton.
Since $\gamma([t,u])$ is closed by \cite[Proposition 1.10]{M} for any $0<t<u$, we have $(*):\partial D \subseteq \bigcap_{0<t<u} \mycl(\gamma((0,t)))$. 
Assume for contradiction that $\partial D$ contains two points, say $x$ and $y$.
Take $\varepsilon>0$ so that $2\varepsilon < \max \{|x_i-y_i|\;|\; 1 \leq i \leq n\}$, where $x_i$ and $y_i$ are the $i$-th coordinates of $x$ and $y$, respectively.
Let $B_x$ and $B_y$ be the open boxes whose length are $2\varepsilon$ centered at $x$ and $y$, respectively.
There exists $\delta_x>0$ such that either $(0, \delta_x) \subseteq \gamma^{-1}(B_x)$ or $(0, \delta_x) \cap \gamma^{-1}(B_x)=\emptyset$ by local o-minimality.
The latter equality contradicts the inclusion (*).
We have $(0, \delta_x) \subseteq \gamma^{-1}(B_x)$.
We can also take $\delta_y>0$ so that $(0, \delta_y) \subseteq \gamma^{-1}(B_y)$.
Set $\delta=\min\{\delta_x,\delta_y\}$.
The open interval $(0,\delta)$ is contained in $\gamma^{-1}(B_x \cap B_y)$, which contradicts the fact that $B_x \cap B_y=\emptyset$. 

When $C$ is a bounded  definable cell of dimension one, it is the image of a bounded open interval $(a,b)$ under a definable continuous map $\gamma:(a,b) \rightarrow M^n$ having the bounded image.
Therefore, the frontier of $C$ consists of two points $\lim_{t \to a}\gamma(t)$ and $\lim_{t \to b}\gamma(t)$.
\end{proof}

\begin{notation}
When $\mathcal M=(M;+,0,\ldots)$ is an expansion of an abelian group, $M^n$ is naturally an abelian group. 
For any definable subset $C$ of $M^n$ and a point $a \in M^n$, the notation $a+C$ denotes the set $\{x \in M^n\;|\; x-a \in C\}$.
\end{notation}

\begin{lemma}\label{lem:base}
Consider a DCULOAS structure $\mathcal M=(M;<,+,0,\ldots)$.
Let $X$ and $Z$ be definable subsets of $M^n$ with $\dim X=1$ and $\dim Z=0$.
Let $R$ be a positive element in $M$.
There exist finitely many bounded definable subsets $C_1, \ldots, C_N$ of $M^n$ of dimension one satisfying the following conditions:
\begin{enumerate}
\item[(a)] For any $z \in Z$ and $1 \leq i \leq N$, the intersection $X \cap (z+C_i)$ either coincides with $z+C_i$ or is an empty set;
\item[(b)] There exist $0<u<R$ and definable continuous injective maps $\gamma_i:(0,u) \rightarrow M^n$ such that 
\begin{itemize}
\item the limits $\lim_{t \to 0}\gamma_i(t)$ are the origin,
\item the limits $z+\lim_{t \to u}\gamma_i(t)$ are not in $Z$ for all $z \in Z$ and 
\item the images $\gamma_i((0,u))$ coincide with $C_i$ for all $1 \leq i \leq N$;
\end{itemize}
\item[(c)] The closure of $X \setminus \left(\{z\} \cup \bigcup_{i=1}^N (z+C_i)\right)$ intersects with $z+\mycl(C_i)$ only at the point $z+\lim_{t \to u}\gamma_i(t)$ for any $z \in Z$.
\end{enumerate}
\end{lemma}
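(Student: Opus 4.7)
My approach exploits the local definable cell decomposition theorem of \cite{Fuji} together with uniform local o-minimality of the second kind, applied to the shift family
\[
Y = \{(z,w) \in Z \times M^n : z + w \in X\},
\]
whose fiber $Y_z = X - z$ records the shape of $X$ seen from the point $z \in Z$. Since $\dim Z = 0$ and each fiber has dimension at most one, $\dim Y \leq 1$. The sets $C_i$ will be obtained from a cell decomposition of $Y$ in the vicinity of the locus $Z \times \{0\}$, restricted to a small box $B_0$ in the $w$-coordinate.

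First I would fix a small open box $B_0 \subseteq M^n$ around the origin and apply the local cell decomposition theorem to $Y \cap (Z \times B_0)$. This yields a finite decomposition into cells, and the $1$-dimensional cells whose closures meet $Z \times \{0\}$ are the branch candidates; by Lemma \ref{lem:limit} each such cell has a well-defined pair of frontier points. The main obstacle is to secure condition (a), i.e.\ to find sets $C_1,\dots,C_N \subseteq M^n$ such that each fiber $Y_z \cap B_0$ minus the origin is literally a union of a subcollection of the $C_i$'s, rather than merely being parameterized by $z$ in a cell. This is where definable bounded multiplication becomes indispensable: together with the ordered-field structure it supplies, it provides the rescaling needed to normalize the fibers of a one-dimensional cell into actual translates of a common curve, and combined with the uniformity afforded by uniform local o-minimality of the second kind and with $\dim Z = 0$, it ensures that only finitely many distinct shape classes arise across the whole of $Z$.

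Once the $C_i$ are constructed, each is a bounded $1$-cell, hence by Lemma \ref{lem:limit} and the standard cell description it is the injective continuous image of a bounded open interval $(0,u_i)$ under a definable continuous map $\gamma_i$ with $\lim_{t \to 0}\gamma_i(t) = 0$. Taking $u = \min_i u_i$ and shrinking below $R$ gives a common parameter domain $(0,u)$. For the second bullet in (b), I must then choose $u$ so that $z+\lim_{t \to u}\gamma_i(t) \notin Z$ for every $z \in Z$ and every $i$; the set of ``bad'' values of $u$ is a definable zero-dimensional subset of $(0,R)$ since $Z$ is zero-dimensional and the family of $\gamma_i$'s is finite, so by definable completeness an acceptable $u$ exists. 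Finally, condition (c) is secured by shrinking $u$ once more so that, inside any $z + \mycl(C_i)$, the only limit of points of $X \setminus (\{z\} \cup \bigcup_j (z+C_j))$ is the outer endpoint $z+\lim_{t \to u}\gamma_i(t)$; this is possible because the local cell-decomposition description of $X$ near each $z$ consists precisely of $\{z\}$ together with the translated branches $z+C_i$.
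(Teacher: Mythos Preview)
Your proposal has a genuine gap centered on condition~(a). First, the lemma does \emph{not} assume definable bounded multiplication; it is stated for a bare DCULOAS structure, so invoking multiplication to ``rescale'' fibers is illegitimate here. Second, the local cell decomposition theorem of \cite{Fuji} is genuinely local: it produces a decomposition only inside a single open box around a chosen point. Applying it to $Y \cap (Z \times B_0) \subseteq M^{2n}$ therefore requires choosing a basepoint $(z_0,0)$, and the resulting decomposition covers only finitely many $z$ near $z_0$. Since $Z$ can be infinite (e.g.\ $\mathbb Z$ in the standard locally o-minimal examples), this cannot yield cells that work uniformly over all of $Z$, and no amount of rescaling fixes that.

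The paper's key idea, which you are missing, is to \emph{project first}: let $\pi:M^{2n}\to M^n$ forget the $Z$-coordinates and apply local cell decomposition to $\pi(Y)\cap B$ around the origin of $M^n$. Because every fiber $Y_z=(-z)+X$ is contained in $\pi(Y)$, the one-dimensional cells $E_1,\dots,E_N$ of this decomposition whose closure contains the origin are automatically $z$-independent candidates for the $C_i$. What remains is to shrink them so that, for each $z$, the set $(-z)+X$ either contains or avoids $E_i$ entirely. Local o-minimality gives the in/out dichotomy near $t=0$ along each $\gamma_i$; one then defines $f_{i1}(z)$ (resp.\ $f_{i2}(z)$) as the supremum of $t$ with $z+\gamma_i((0,t))\subseteq X$ (resp.\ disjoint from $X$). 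Since $\dim Z=0$, these definable functions have zero-dimensional, hence discrete and closed, images by \cite{Fuji3,Fuji4}, so their infimums are strictly positive. Taking $u$ below all of them (and below $R$) secures (a); conditions (b) and (c) then follow by further routine shrinking. No multiplication is needed anywhere.
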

\begin{proof}
We first construct a bounded open box $B$ with $Z \cap \mycl(z+B)=\{z\}$.
Let $p_i:M^n \rightarrow M$ be the projection onto the $i$-th coordinate for every $1\leq i \leq n$.
The image $p_i(Z)$ is closed and discrete by \cite[Theorem 1.1]{Fuji3} and \cite[Lemma 2.3]{Fuji4}.
We assume that $p_i(Z)$ has at least three points for every $1\leq i \leq n$.
We can construct $B$ in the same manner in the other cases.
Define the definable functions $g_i:p_i(Z) \setminus \sup p_i(Z) \rightarrow M$ and $h_i:p_i(Z) \setminus \inf p_i(Z) \rightarrow M$ by
\begin{align*}
&g_i(z) = \inf\{x \in Z\;|\; x>z\} - z \text{ and }\\
&h_i(z) = z-\sup\{x \in Z \;|\; x<z\}\text{.}
\end{align*}
The images of $g_i$ and $h_i$ are of dimension zero by \cite[Theorem 1.1]{Fuji3}.
They are discrete and closed by \cite[Lemma 2.3]{Fuji4}.
We can take a positive $d_i \in M$ such that $d_i < \inf g_i(p_i(Z) \setminus \sup p_i(Z))$ and $d_i < \inf h_i(p_i(Z) \setminus \inf p_i(Z))$.
Set $B=\prod_{i=1}^N (-d_i,d_i)$.
We obviously have $Z \cap \mycl(z+B)=\{z\}$ for any $z \in Z$.

Consider the definable set $Y=\bigcup_{z \in Z} \{z\} \times ((-z)+X)$.
It is of dimension one by \cite[Proposition 2.12, Theorem 3.14]{Fuji}.
Let $\pi:M^{2n} \rightarrow M^n$ be the coordinate projection forgetting the first $n$ coordinates.
We have $\dim \pi(Y)=1$ by \cite[Lemma 5.1]{Fuji} and \cite[Theorem 1.1]{Fuji3} because $\pi(Y)$ contains the one-dimensional definable set $(-z)+X$.

Shrink the open box $B$ if necessary.
Then, there exists a definable cell decomposition $\mathcal D=\{D_i\}_{i=1}^L$ of $B$ partitioning the singleton consisting of the origin and $\pi(Y) \cap B$ by \cite[Theorem 4.2]{Fuji}.
Let $\mathcal E$ be the family of the cells in $\mathcal D$ of dimension one contained in $\pi(Y)$ whose closure contains the origin.
Let $E_1, \ldots, E_N$ be the enumeration of the elements in $\mathcal E$.
For all $1 \leq i \leq N$, we can take definable homeomorphisms $\gamma_i:I_i \rightarrow E_i$, where $I_i$ is bounded open intervals, for all $1 \leq i \leq N$ by the definition of cells.
We may assume that $I_i=(0,r_i)$ for some positive $r_i$ and the limit $\lim_{t \to 0}\gamma_i(t)$ is the origin without loss of generality. 
The number of cells are finite and there are only finitely many points contained in the frontier of $1$-dimensional cell by Lemma \ref{lem:limit}.
Taking smaller $r_i$ if necessary, we may assume that $\mycl(D) \cap \gamma_i((0,r_i))=\emptyset$ for any cell $D \in \mathcal D$ of dimension one with $D \neq E_i$.
Furthermore, we may assume that $r_i<R$.

Fix $1 \leq i \leq N$.
Set $Z_i=\{z \in Z\;|\; X \cap (z+C_i) \neq \emptyset\}$.
It is a definable set of dimension zero.
For any $z \in Z_i$, one of the following condition is satisfied because of local o-minimality.
\begin{itemize}
\item There exists a positive $r<r_i$ such that $z+\gamma_i(t) \in X$ for all $0<t<r$;
\item There exists a positive $r<r_i$ such that $z+\gamma_i(t) \not\in X$ for all $0<t<r$.
\end{itemize}
Let $Z_{i1}$ be the set of points in $Z_i$ satisfying the former condition.
The set of points in $Z_i$ satisfying the latter condition is denoted by $Z_{i2}$.
They are obviously definable subsets of $Z_i$ at most of dimension zero.
In this proof, we construct $C_i$ only when both $Z_{i1}$ and $Z_{i2}$ are not empty.
We can construct $C_i$ similarly in the other cases.
We define definable maps $f_{i1}:Z_{i1} \rightarrow M$ and $f_{i2}:Z_{i2} \rightarrow M$ by
\begin{align*}
&f_{i1}(z)=\sup\{t \in M\;|\;0<t<r_i \text{ and }z+\gamma_i(s) \in X\text{ for all }0<s<t\} \text{ and }\\
&f_{i2}(z)=\sup\{t \in M\;|\;0<t<r_i \text{ and }z+\gamma_i(s) \not\in X\text{ for all }0<s<t\}\text{.}
\end{align*}
The images $f_{ij}(Z_{ij})$ are of dimension zero by \cite[Theorem 1.1]{Fuji3} for $j=1,2$.
They are closed and discrete by \cite[Lemma 2.3]{Fuji4}.
The infimums of $f_{ij}(Z_{ij})$ are elements in $f_{ij}(Z_{ij})$, and they are positive.
Set $u=\min\{\inf f_{ij}(Z_{ij})\;|\;1 \leq i \leq N, 1 \leq j \leq 2\}$.
We put $C_i=\gamma_i((0,u))$.
Three conditions (a) through (c) in the lemma are obviously satisfied.
\end{proof}

\begin{lemma}\label{lem:aaa}
Let $\mathcal M=(M,<,0,+,\ldots)$ be an expansion of a densely linearly ordered group having definable bounded multiplication compatible to $+$.
For any $0<u<v$, there exist $0<u'<u<v<v'$ and a definable increasing $\tau^{\text{af}}$-homeomorphism between $[0,u']$ and $[0,v']$.
\end{lemma}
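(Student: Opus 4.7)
The idea is to realize the homeomorphism as scalar multiplication in the ordered field $(M,<,0,1,+,\cdot)$ supplied by the definable bounded multiplication hypothesis. I would pick any $u' \in (0,u)$ and any $v' > v$ in $M$ (both exist by density and unboundedness of the ordered group), and set $\lambda = v' \cdot (u')^{-1}$ in the field. Then the map $f(x) = \lambda \cdot x$ is an order-preserving bijection from $[0,u']$ onto $[0,v']$: it sends $0 \mapsto 0$ and $u' \mapsto v'$, and is strictly monotone since $\lambda > 0$. Because a strictly increasing bijection between two intervals of a linearly ordered set is always a homeomorphism for the order topology, and the order topology on an interval agrees with the subspace (affine) topology inherited from $M$, $f$ is automatically a $\tau^{\text{af}}$-homeomorphism and its inverse is of the same form.

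The only point that requires real work is to check that $f$ is \emph{definable} in $\mathcal M$, since the full product $\cdot$ is not assumed to be. I would choose any $w \in M$ with $w > \max\{u',v',\lambda\}$ and set $I = (-w,w)$; this is a bounded open interval containing $\lambda$, the domain $[0,u']$, and the codomain $[0,v']$. By the hypothesis of definable bounded multiplication, $\cdot|_{I \times I}$ is definable in $\mathcal M$. For every $x \in [0,u']$ the pair $(\lambda,x)$ lies in $I \times I$ and the product $\lambda \cdot x$ lies in $I$, so the graph of $f$ is cut out of $[0,u'] \times I$ by the formula defining $\cdot|_{I \times I}$. Hence $f$ is definable, and the inverse is definable as well.

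The main obstacle, such as it is, lies precisely in this packaging step: one must ensure that $\lambda$ together with the domain and image of $f$ all sit inside a single bounded open interval so that the hypothesis on $\cdot|_{I \times I}$ can be invoked, and one must not lose the target point $v'$ by letting $\lambda \cdot x$ escape this interval. Once $w$ is chosen large enough both conditions hold simultaneously, and the construction goes through without needing anything as heavy as the local cell decomposition used in Lemma~\ref{lem:base}.
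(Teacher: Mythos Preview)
Your argument is correct and is exactly the sort of ``obvious'' verification the paper has in mind: the authors write only ``Obvious. We omit the proof.'' for this lemma, and your scalar-multiplication construction $f(x)=\lambda\cdot x$ with $\lambda=v'\cdot(u')^{-1}$, together with the observation that a single bounded interval $I=(-w,w)$ with $w>\max\{u',v',\lambda\}$ captures all relevant products, is precisely the intended unpacking. There is nothing to add.
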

\begin{proof}
Obvious. We omit the proof.
\end{proof}

\section{Proof of main theorem}\label{sec:proof}
We begin to demonstrate the main theorem.
Consider a DCULOAS structure $\mathcal M=(M;<,+,0,\ldots)$.
Let $X$ be a definable subset of $M^n$.
For any definable subset $U$ of $X$, the notation $\mycl^{\text{af}}(U)$ means the closure of $U$ in $M^n$ under the affine topology.
Consider a definable topology $\tau$ on $X$.
The notation $\mathcal B_a$ denotes the definable basis of neighborhoods of $a \in X$.
\begin{definition}
We define the \textit{set of shadows} of a point $a \in X$ to be
\[
\mathbf{S}_{\tau}(a):= \bigcap_{U \in \mathcal B_a}\mycl^{\text{af}}(U)\text{.}
\] 
The set $\mathbf{S}_{\tau}(a)$ is a definable closed subset of $M^n$.
We call a point in $\mathbf{S}_{\tau}(a)$ a \textit{shadow} of $a$.
We simply write $\mathbf{S}(a)$ when the definable topology $\tau$ is clear from the context.
\end{definition}

\begin{proof}[Proof of Theorem \ref{thm:one-dim_top}]
We first demonstrate that the condition (1) implies the condition (2).
Let $f:(X, \tau) \rightarrow (Y,\tau_Y^{\text{af}})$ be a definable homeomorphism, where $Y$ is a definable subset of $M^k$.
Here, the notation $\tau_Y^{\text{af}}$ denotes the affine topology on $Y$.
Consider the affine topology $\tau_X^{\text{af}}$ on $X$.
Let $f':X \rightarrow Y$ be a definable map defined by $f'(x)=f(x)$.
Let $G$ be the set of points at which $f'$ is discontinuous with respect to $\tau_X^{\text{af}}$.
We have $\dim G < \dim X=1$ by \cite[Corollary 1.2]{Fuji3}.
We get $\dim f(G) \leq 0$ by \cite[Theorem 1.1]{Fuji3}.
The set $f(G)$ is $\tau^{\text{af}}$-closed and $\tau^{\text{af}}$-discrete by \cite[Lemma 2.3]{Fuji4}.
The set $G$ is $\tau$-closed and $\tau$-discrete because $f$ is a homeomorphism.

The restriction $f'|_{X \setminus G}$ of $f'$ to $X \setminus G$ induces a homeomorphism between $(X \setminus G, \tau_{X \setminus G}^{\text{af}})$ and $(Y \setminus f(G), \tau_{Y\setminus f(G)}^{\text{af}})$.
We call it $g$.
The composition $h=g^{-1}\circ f|_{X \setminus G}:(X \setminus G, \tau|_{X \setminus G}) \rightarrow (X \setminus G, \tau_{X\setminus G}^{\text{af}})$ is a definable homeomorphism given by $h(x)=x$.
Here, the notation $\tau|_{X \setminus G}$ denotes the topology on $X \setminus G$ induced from $\tau$.
In particular, it implies the condition (2)(i).

Apply Lemma \ref{lem:base} to $Y$ and $f(G)$.
There are finitely many definable subsets $D_1, \ldots, D_K$ of $M^k$ such that the conditions (a) through (c) in Lemma \ref{lem:base} are satisfied.
Let $\eta_i:(0,u') \rightarrow M^n$ be the definable continuous maps given in the condition (b) for all $1 \leq i \leq K$.
Take $x \in G$.
Set $z=f(x)$.
Permuting the sequence $D_1, \ldots, D_K$ if necessary, we may assume that $Y \cap (z+D_i) = z+D_i$ for $1 \leq i \leq L$ and $Y \cap (z+D_i) = \emptyset$ for $L<i \leq K$ by the condition (a).
For any definable $\tau$-open neighborhood $U$ of $x$, the image $U'=f(U)$ is a $\tau^{\text{af}}$-open neighborhood of $z$.
We can take a $\tau^{\text{af}}$-open neighborhood $V'$ of $z$ contained in $U'$ such that $V'$ does not intersect with $Y \setminus (\{z\} \cup \bigcup_{i=1}^K z+D_i)$ and $\eta_i^{-1}(V')$ is of the form $(0,r)$, where $r$ is a sufficiently small positive integer.
The inverse image $V=f^{-1}(V')$ is a $\tau$-open neighborhood of $x$.
The $\tau^{\text{af}}$-definably connected components of $V' \setminus \{z\}$ are of the form  $\eta_i((0,r))$.
Their inverse images $f^{-1}(\eta_i((0,r)))$ are also $\tau^{\text{af}}$-definably connected by the condition (2)(i).
Therefore, $V \setminus \{x\}$ has at most $K$ $\tau^{\text{af}}$-definably connected components.
We have demonstrated that the condition (2)(ii) is satisfied.
\medskip

We next prove the opposite implication.
There is nothing to prove when $G$ is an empty set.
So, we assume that $G$ is not empty.
The frontier of $\partial X$ is at most of dimension zero by \cite[Theorem 5.6]{Fuji} and it is discrete and closed by  \cite[Proposition 2.13, Proposition 3.2]{Fuji4}.
Set $Z=\partial X \cup G$.
Note that $Z$ is discrete and closed in the affine topology.
We demonstrate two claims.
\medskip

\textbf{Claim 1.} $\mathbf{S}(x) \subseteq Z$ for all $x \in G$.
\medskip

Take a point $y \not\in Z$.
Since $\tau$ is Hausdorff, we take a $\tau$-open definable subset $U$ of $X$ such that $x \in U$ and $y \not\in U$.
Since $\tau$ is regular, we get a $\tau$-open definable subset $V$ of $X$ such that $x \in V \subseteq \mycl^{\tau}(V) \subseteq U$.
In particular, we have $y \not\in \mycl^{\tau}(V)$.
By the assumption, we get $\mycl^{\tau}(V) \cap (X \setminus G)=\mycl^{\text{af}}(V) \cap (X \setminus G)$.
In particular, $y \not\in \mycl^{\text{af}}(V)$.
It implies that $y \not\in \mathbf{S}(x)$.
The proof of Claim 1 has finished.
\medskip

\textbf{Claim 2.} Let $A$ be a bounded definable subset such that $\mycl^{\text{af}}(A) \subseteq X \setminus G$.
We have $\mycl^{\text{af}}(A)=\mycl^{\tau}(A)$.
\medskip

The condition (2)(i) implies the inclusion $\mycl^{\text{af}}(A) \subseteq \mycl^{\tau}(A) \subseteq \mycl^{\text{af}}(A) \cup G$.
We have only to demonstrate that $\mycl^{\tau}(A) \cap G$ is an empty set.
Assume for contradiction that we can take a point $x \in \mycl^{\tau}(A) \cap G$.
Consider the definable family $\{\mycl^{\text{af}}(A) \cap \mycl^{\text{af}}(U)\}_{U \in \mathcal B_x}$ of nonempty definable $\tau^{\text{af}}$-closed sets.
This family is obviously a definable filtered collection, which is defined in \cite[Definition 5.5]{FKK}.
Since $\mycl^{\text{af}}(A)$ is definably compact by \cite[Remark 5.6]{FKK}, the intersection $\bigcap_{U \in \mathcal B_x} (\mycl^{\text{af}}(A) \cap \mycl^{\text{af}}(U))$ is not an empty set.
We take a point $y$ in this intersection. 
The relation $y \in \mathbf S(x)$ immediately follows from the definition of the set of shadow points.
It contradicts Claim 1 because $y \in \mycl^{\text{af}}(A) \subseteq X \setminus G$.
We have demonstrated Claim 2.
\medskip

We have $\dim Z=0$ by \cite[Corollary 5.4(ii), Theorem 5.6]{Fuji}.
Let $M^n$ be the ambient space of $X$.
Shifting $X$ if necessary, we may assume that $\mycl^{\text{af}}(X)$ is contained in $(0,R)^n$ for some $R>0$.
Applying Lemma \ref{lem:base} to $X$ and $Z$, we obtain an open box $B$ in $M^n$ containing the origin and finitely many bounded definable subsets $C_1, \ldots, C_N$ of $B$ of dimension one satisfying the following conditions:
\begin{enumerate}
\item[(a)] For any $z \in Z$ and $1 \leq i \leq N$, the intersection $X \cap (z+C_i)$ either coincides with $z+C_i$ or is an empty set;
\item[(b)] There exist $0<u<R$ and definable continuous injective maps $\gamma_i:(0,u) \rightarrow M^n$ such that 
\begin{itemize}
\item the limits $\lim_{t \to 0}\gamma_i(t)$ are the origin,
\item the limits $z+\lim_{t \to u}\gamma_i(t)$ are not in $Z$ for all $z \in Z$ and 
\item the images $\gamma_i((0,u))$ coincide with $C_i$ for all $1 \leq i \leq N$;
\end{itemize}
\item[(c)] The closure of $X \setminus \left(\{z\} \cup \bigcup_{i=1}^N (z+C_i)\right)$ intersects with $z+\mycl^{\text{af}}(C_i)$ only at the point $z+\lim_{t \to u}\gamma_i(t)$ for any $z \in Z$.
\end{enumerate}
By Lemma \ref{lem:aaa}, we may assume that there exists a definable increasing $\tau^{\text{af}}$-homeomorphism $\Phi:[0,u] \rightarrow [0,3nR]$ taking a smaller $u$ and a larger $R$ if necessary.

The notation $C_{i,z}$ denotes the set $z+C_i$ for every $1 \leq i \leq N$ and $z \in Z$.
We say that $C_{i,z}$ is \textit{inclusive} if $X \cap C_{i,z}=C_{i,z}$, and we call it \textit{exclusive} if $X \cap C_{i,z}=\emptyset$.
The definable set $C_{i,z}$ is either inclusive or exclusive by the condition (a).
By the condition (b) and Lemma \ref{lem:limit}, the frontier $\partial C_{i,z}$ of $C_{i,z}$ under the affine topology consists of two points.
One is the point $z$.
Another point is called the \textit{non-trivial endpoint} of $C_{i,z}$.
The map $$p_i:Z \rightarrow M^n$$ is the definable map so that $p_i(z)$ is the non-trivial endpoint of $C_{i,z}$ for all $1 \leq i \leq N$.
Note that the map $p_i$ is injective.
By the condition (b), the non-trivial endpoint of inclusive $C_{i,z}$ lies in $X \setminus G$.

Consider an inclusive $C_{i,z}$.
The notation $\partial^{\tau}C_{i,z}$ denotes the $\tau$-frontier $\mycl^{\tau}(C_{i,z}) \setminus C_{i,z}$.
We investigate the $\tau$-frontier $\partial^{\tau}C_{i,z}$ of $C_{i,z}$.
\medskip

\textbf{Claim 3.} The intersection $(\partial^{\tau}C_{i,z}) \cap G$ has at most one point.
\medskip

Consider the case in which $(\partial^{\tau}C_{i,z}) \cap G$ is not empty.
Take a point $x$ in this set.
We have only to demonstrate that $y \not\in \mycl^{\tau}C_{i,z}$ for $y \in G$ with $y \neq x$.
Since $\tau$ is Hausdorff, we can take a $\tau$-open definable subset $U$ of $X$ with $x \in U$ and $y \not\in U$.
Since $\tau$ is regular, we can take a $\tau$-open definable subset $V$ of $X$ such that $x \in V \subseteq \mycl^{\tau}(V) \subseteq U$.

By local o-minimality, we have either $z+\gamma_i((0,r)) \subseteq V$ or $(z+\gamma_i((0,r))) \cap V = \emptyset$ for any sufficiently small $r>0$.
If the latter holds true, we have $x \in \mycl^{\tau}(z+\gamma_i([r,u)))$.
On the other hand, we get $\mycl^{\tau}(z+\gamma_i([r,u)))=\mycl^{\text{af}}(z+\gamma_i([r,u)))$ by Claim 2.
It implies $x \in \mycl^{\text{af}}(z+\gamma_i([r,u)))$, which is a contradiction to the condition (b).

Take $r>0$ so that $z+\gamma_i((0,r)) \subseteq V$.
We finally get $\mycl^{\tau}(C_{i,z})=\mycl^{\tau}(z+\gamma_i((0,r))) \cup \mycl^{\tau}(z+\gamma_i([r,u))) \subseteq \mycl^{\tau}(V) \cup \mycl^{\text{af}}(z+\gamma_i([r,u)))$ using Claim 2 again. 
It implies that $y \not\in \mycl^{\tau}C_{i,z}$ because $y \in G$ and $G \cap \mycl^{\text{af}}(z+\gamma_i([r,u)))=\emptyset$.
The proof of Claim 3 has been completed.
\medskip

Thanks to Claim 3, $(\partial^{\tau}C_{i,z}) \cap G$ has a unique point when it is not empty.
This unique point is called the \textit{$\tau$-connection point} of $C_{i,z}$.
We set
\begin{align*}
\mathcal Z_{\text{no},i}=\{z \in Z\;|\; &C_{i,z}\text{ is inclusive and does not have its }\tau\text{-connection point}\}\text{,}\\
\mathcal Z_{\text{self},i}=\{z \in Z\;|\; &C_{i,z}\text{ is inclusive, and it has its }\tau\text{-connection point}\\
& \text{and its }\tau\text{-connection point} = z\}\\
\mathcal Z_{i}=\{z \in Z\;|\; &C_{i,z}\text{ is inclusive, and it has its }\tau\text{-connection point}\\
& \text{and its }\tau\text{-connection point} \neq z\}
\end{align*}
for $1 \leq i \leq N$.
The sets $\mathcal Z_{\text{no},i}$, $\mathcal Z_{\text{self},i}$ and $\mathcal Z_i$ are definable.
We also define the definable maps $$\zeta_i:\mathcal Z_{i} \cup \mathcal Z_{\text{self},i} \rightarrow G$$ so that $\zeta_i(z)$ is the $\tau$-connection point of $C_{i,z}$.

Fix $1 \leq i \leq N$ and $z \in \mathcal Z_i$.
Let $\overline{\gamma}_{i,z}:[0,u] \rightarrow M^n$ be the $\tau^{\text{af}}$-continuous extension of the definable $\tau^{\text{af}}$-continuous curve $t \mapsto z+\gamma_i(t)$ for all $1 \leq i \leq N$ and $z \in \mathcal Z_i$.
Such extension exists by Lemma \ref{lem:limit}.
On the other hand, for any two points $x=(x_1,\ldots, x_n)$ and $y=(y_1,\ldots, y_n)$, we set $$d(x,y)=\sum_{i=1}^n|x_i-y_i|\text{.}$$
We set $d'_{i,z}(t)=d(\zeta_i(z),\overline{\gamma}_{i,z}(t))+2d(\overline{0}_n,\overline{\gamma}_{i,z}(t))$ for simplicity, where $\overline{0}_n$ denotes the origin of $M^n$.
The function $d'_{i,z}$ is definable and $\tau^{\text{af}}$-continuous.
We get $\Phi(0)=0<d'_{i,z}(0)=d(\zeta_i(z),z)+2d(\overline{0}_n,z)>0=\Phi(0)$ and $\Phi(u)=3nR>d(\zeta_i(z),p_i(z))+2d(\overline{0}_n,p_i(z))=d'_{i,z}(u)$.
By the intermediate value theorem, we obtain $\Phi(v)=d'_{i,z}(v)$ for some $0<v<u$.
Therefore, the map $v_i:\mathcal Z_i \rightarrow (0,u)$ given by
\begin{align*}
& v_i(z)=\inf\{0<t<u\;|\; \Phi(t)=d'_{i,z}(t)\}
\end{align*}
is a well-defined definable function.
The definable map $q_i:\mathcal Z_i \rightarrow \bigcup_{z \in \mathcal Z_i} C_{i,z}$ is given by $q_i(z)=z+\gamma_i(v_i(z))$.
We get $q_i(z) \in C_{i,z}$ and $$d(\zeta_i(z),q_i(z))+2d(\overline{0}_n,q_i(z))=\Phi(v_i(z))\text{.}$$
We put
$$C'_{i,z}=z+\gamma((0,v_i(z)))\text{.}$$

We also need the following claim.
\medskip

\textbf{Claim 4.} The inverse images $\zeta_i^{-1}(x)$ have at most $K$ points for all $1 \leq i \leq N$ and $x \in G$.
\medskip

Assume for contradiction that $\zeta_i^{-1}(x)$ has $K+1$ points $z_1, \ldots, z_{K+1}$.
Consider the definable $\tau^{\text{af}}$-closed subsets $z_j+\gamma_i([u/3,2u/3])$ for $1 \leq j \leq K+1$.
It is also $\tau$-closed by Claim 2.
There exists a definable $\tau$-open neighborhood $U$ of $x$ in $X$ such that $U \cap  (z_j+\gamma_i([u/3,2u/3]))=\emptyset$ for all $0 \leq j \leq K+1$ and $U \setminus \{x\}$ has at most $K$ $\tau^{\text{af}}$-definably connected components by the condition (2)(ii).
On the other hand, the set $z_j + \gamma_i((0,r_j))$ is a $\tau^{\text{af}}$-definably connected component of $U \setminus \{x\}$ for some sufficiently small $r_j$ for any $1 \leq j \leq K+1$ because of the definition of $\tau$-connection points.
Contradiction.
We have proved Claim 4.

\medskip
The inverse image $\zeta_i^{-1}(x)$ consists of at most $K$ points by Claim 4.
We set 
\begin{align*}
\mathcal Z_{i,k}=\{z \in \mathcal Z_i\;|\; &z \text{ is the }k\text{-th smallest element in }\zeta_i^{-1}(\zeta_i(z))\\
& \text{ in the lexicographic order}\}
\end{align*}
for $1 \leq i \leq N$ and $1 \leq k \leq K$.
It is also definable.
We have $\mathcal Z_i = \bigcup_{k=1}^K \mathcal Z_{i,k}$.
We also put 
\begin{align*}
&\mathcal C_{\text{no},i} = \bigcup_{z \in \mathcal Z_{\text{no},i}}C_{i,z} \text{,} \qquad
\mathcal C_{\text{self},i} = \bigcup_{z \in \mathcal Z_{\text{self},i}}C_{i,z} \text{,} \qquad
\mathcal C'_{i,k} = \bigcup_{z \in \mathcal Z_{i,k}}C'_{i,z} \text{ and }\\
&\mathcal C_i = \mathcal C_{\text{no},i} \cup \bigcup_{k=1}^K \mathcal C'_{i,k}\text{.}
\end{align*}
We can define the definable map $\rho_{i}:\mathcal C_{i} \rightarrow  Z$ so that $\rho_{i}(x)=z$ if and only if $x \in C_{i,z}$.

We need an extra preparation.
For given points $p,q \in M^n$, the \textit{standard connection between $p$ and $q$} is  the union of $n$ segments connecting the $n+1$ points $p=v_0, v_1,\ldots, v_n=q$ in the given order, where $v_j$ is the point whose last $j$ coordinates equal the last $j$ coordinates of $q$ and whose first $n-j$ coordinates are the first $n-j$ coordinates of $p$.
We denote it by $\mystd(p,q)$.
We can easily construct a definable map $\Psi:\{(t,p,q) \in M \times M^n \times M^n \;|\; 0 \leq t \leq d(p,q)\} \rightarrow M^n$ such that, for fixed $p,q \in M^n$, the restriction $\Psi(\cdot,p,q):[0,d(p,q)] \rightarrow M^n$ has the image $\mystd(p,q)$  and this restriction is a $\tau^{\text{af}}$-homeomorphism onto its image. 
\medskip

We have now finished long preparation.
We construct a definable subset $Y$ of $M^{n(1+KN)}$ and a definable homeomorphism $f:(X,\tau) \rightarrow (Y,\tau^{\text{af}})$.
The notation $\overline{0}_m$ denotes the origin of $M^m$ for any positive integer $m$.

We define that $Y$ is the union of the following subsets of $M^{n(1+KN)}$:
\begin{itemize}
\item $(X \setminus \bigcup_{i=1}^N \mathcal C_i)\times \{\overline{0}_{nKN}\}$;
\item For any $1 \leq i \leq N$ and $z \in Z$ such that $C_{i,z}$ is inclusive, 
\begin{itemize}
\item $\{p_i(z)\} \times \{\overline{0}_{(i-1)nK}\} \times (0,u) \times \{\overline{0}_{nK(N-i+1)+n-1}\}$ for $z \in \mathcal Z_{\text{no},i}$;
\item when $z \in \mathcal Z_{i,k}$, the union of the following three definable sets:
\begin{itemize}
\item $\{\zeta_i(z)\} \times \{\overline{0}_{m_1}\} \times \mystd(\overline{0}_n,q_{i}(z)) \times \{\overline{0}_{m_2}\}$, 
\item $\mystd(\zeta_i(z),q_{i}(z)) \times \{\overline{0}_{m_1}\} \times \{q_{i}(z)\} \times \{\overline{0}_{m_2}\}$ and 
\item $\{q_{i}(z)\} \times \{\overline{0}_{m_1}\} \times \mystd(q_{i}(z),\overline{0}_n) \times \{\overline{0}_{m_2}\}$,
\end{itemize}
where $m_1=n((i-1)K+(k-1))$ and $m_2=n(KN-iK+K-k)$.
\end{itemize}
\end{itemize}
Roughly speaking, when $z \in \mathcal Z_i$, we connect the non-trivial endpoint $p_i(z)$ and the $\tau$-connection point $\zeta_i(z)$ with a curve for any $1 \leq i \leq N$ and $z \in Z$.
Since at most $KN$ curves whose $\tau^{\text{af}}$-closures do not contain the point $z$ gather at any point $z$ in $G$ under the topology $\tau$, we can connect them so that two curves do not intersect each other.
We can construct a first order formula defining the set $Y$ using $p_i$, $\zeta_i$ and $C_i$.
It implies that $Y$ is definable.

We next construct the definable homeomorphism $f:(X,\tau) \rightarrow (Y,\tau^{\text{af}})$.
We define $f(x)$ as follows:
\begin{itemize}
\item $f(x)=(x,\overline{0}_{nKN})$ when $x$ is in $X \setminus \bigcup_{i=1}^N \mathcal C_i$;
\item When $x$ is contained in $\mathcal C_{i}$, we define as follows:
\begin{itemize}
\item When $x \in \mathcal C_{\text{no},i}$, we set 
$
f(x)=(p_i(\rho_i(x)),\overline{0}_{m'_1},u-\gamma_i^{-1}(x-\rho_{i}(x)),\overline{0}_{m'_2})\text{,}
$
where $m'_1=(i-1)nK$ and $m'_2=nK(N-i+1)+n-1$.
\item We consider the case in which $x \in \mathcal C_{i,k}$, set $z=\rho_{i,k}(x)$, $t=\gamma_i^{-1}(x-z)$, $\mathfrak q=q_i(z)$ and $\zeta=\zeta_i(z)$.
We define as follows:
\begin{itemize}
\item $f(x)=(\zeta,\overline{0}_{m_1},\Psi(t,\overline{0}_n,\mathfrak q),\overline{0}_{m_2})$ when $0<t \leq d(\overline{0}_n,\mathfrak q)$;
\item $f(x)=(\Psi(t-d(\overline{0}_n,\mathfrak q),\zeta,\mathfrak p) ,\overline{0}_{m_1},\mathfrak q,\overline{0}_{m_2})$ when $d(\overline{0}_n,\mathfrak q)<t \leq d(\overline{0}_n,\mathfrak q)+d(\zeta,\mathfrak q)$;
\item $f(x)=(\mathfrak q,\overline{0}_{m_1},\Psi(t-d(\overline{0}_n,\mathfrak q)-d(\zeta,\mathfrak q),\mathfrak q,{0}_n) ,\overline{0}_{m_2})$ when $d(\overline{0}_n,\mathfrak q)+d(\zeta,\mathfrak q)<t<v_i(z)$,
\end{itemize}
where $m_1=n((i-1)K+(k-1))$ and $m_2=n(KN-iK+K-k)$.
\end{itemize} 
\end{itemize}
The map $f$ defined above is a definable map.
It is not difficult to prove that $f$ is a bijection.
The proof is left to the readers.
The remaining task is to show that $f$ is a homeomorphism.
Note that the restriction of $f$ to $X \setminus G$ induces a definable homeomorphism between $(X \setminus G,\tau_{\text{af}})$ and $(Y \setminus f(G),\tau_{\text{af}})$. 

We prove that $f$ is an open map.
Take a $\tau$-open definable set $U$ of $X$.
Set $V=f(U)$.
Take a point $y \in V$ and set $x=f^{-1}(y)$.
We construct a definable open neighborhood $W$ of $y$ contained in $V$.
The case in which $x \not\in G$ is easy.
The set $U \setminus G$ is $\tau^{\text{af}}$-open by the assumption.
Take a $\tau^{\text{af}}$-open neighborhood $U_1$ of $x$ in $X$ contained in $U \setminus G$.
It is also $\tau$-open because $G$ is $\tau$-closed.
Since the restriction of $f$ to $X \setminus G$ induces a definable homeomorphism, the image $W=f(U_1)$ is an $\tau^{\text{af}}$-open neighborhood of $y$.

When $x \in G$, take all $C_{i,z}$ whose $\tau$-connection point is $x$.
We can take finitely many such sets by Claim 4.
Set $\mathcal D(x)=\{(i,z) \in \mathbb Z \times Z\;|\; 1 \leq i \leq N, \ z \in \mathcal Z_{i} \cup \mathcal Z_{\text{self},i}, \ \zeta_i(z)=x\}$.
For any $(i,z) \in \mathcal D(x)$, there exists $u_{i,z}>0$ such that $z+\gamma_i((0,u_{i,z}))$ is contained in $U$ by the definition of $\tau$-connection points.
We can choose $u_{i,z}$ so that the map $z \mapsto u_{i,z}$ is definable.
By the definition of $Y$ and $f$, $$W=\{y\} \cup \bigcup_{(i,z) \in \mathcal D(x)} f(\gamma_i((0,u_{i,z})))$$ is a definable $\tau^{\text{af}}$-open subset of $V$ containing the point $y$.

We next prove that the inverse $f^{-1}$ of $f$ is an open map.
Take a $\tau^{\text{af}}$-open definable set $V$ of $Y$.
Set $U=f^{-1}(V)$.
Take a point $x \in U$.
We construct a definable open neighborhood $U_1$ of $x$ contained in $U$.
The case in which $x \not\in G$ is easy.
We omit the proof.

We consider the case in which $x \in G$.
Since $G$ is $\tau$-closed and $\tau$-discrete, the set $G \setminus \{x\}$ is $\tau$-closed.
We can take disjoint definable $\tau$-open subsets $U_1$ and $U_2$ of $X$ such that $x \in U_1$ and $G \setminus \{x\} \subseteq U_2$ because $\tau$ is regular.
We show that $(z+\gamma_i((0,t))) \cap U_1=\emptyset$ for all sufficiently small $t>0$ when $1 \leq i \leq N$, $z \in \mathcal Z_{i}$ and $\zeta_i(z) \neq x$.
Assume the contrary.
There exists $1 \leq i \leq N$ and $z \in \mathcal Z_{i}$ such that $\zeta_i(z) \neq x$ and $z+\gamma_i((0,t)) \subseteq U_1$ for some $t>0$ by local o-minimality.
The set $z+\gamma_i((0,t))$ has a nonempty intersection with any $\tau$-neighborhood of $\zeta_i(z)$ by the definition of a $\tau$-connection point.
In particular, we have $U_1 \cap U_2 \neq \emptyset$, which is a contradiction.

We next consider the intersection of $U_1$ with $C_{i,z}$ for $z \in \mathcal Z_{\text{no},i}$.
Shrinking $U_1$ if necessary, we may assume that $U_1$ has at most $K$ $\tau^{\text{af}}$-definably connected components by the condition (2)(ii).
In particular, we have only finitely many $z \in \mathcal Z_{\text{no},i}$ such that $U_1$ contains $z+\gamma_i((0,t))$ for some $t>0$.
Since $z+\gamma_i((0,t/2])$ is $\tau$-closed by Claim 3, we may assume that, for any $z \in \mathcal Z_{\text{no},i}$, $(z+\gamma_i((0,s))) \cap U_1=\emptyset$ for all sufficiently small $s>0$ by removing $z+\gamma_i((0,t/2])$ from $U_1$ if necessary.

We set $\mathcal Z_{\text{in},i}(x)=\{z \in \mathcal Z_{i} \cup \mathcal Z_{\text{self},i}\;|\; \zeta_i(z) = x\} $ and $\mathcal Z_{\text{out},i}(x)=\mathcal Z_{\text{no},i} \cup \{z \in \mathcal Z_{i} \cup \mathcal Z_{\text{self},i}\;|\; \zeta_i(z) \neq x\}$.
We also consider the definable map $\xi_i:\mathcal Z_{\text{out},i}(x) \rightarrow M$ given by 
$$
\xi_i(z)=\sup\{0<t<u\;|\; (z+\gamma_i((0,t))) \cap U=\emptyset\}\text{.}
$$ 
Consider the definable set $$\Pi=X \setminus \left(D \cup \bigcup_{i=1}^N\left(\bigcup_{z \in \mathcal Z_{\text{out},i}(x)}(z+\gamma((0,\xi_i(z)))) \cup \bigcup_{z \in \mathcal Z_{\text{in},i}(x)} C_{i,z}\right)\right)\text{.}$$
It is a definable $\tau^{\text{af}}$-closed subset of $X \setminus D$.
It is also $\tau$-closed by Claim 2. 
Removing $\Pi$ from $U_1$, we may assume that $U_1$ is contained in $D \cup \bigcup_{i=1}^N\bigcup_{z \in \mathcal Z_{\text{in},i}(x)} C_{i,z}\text{.}$
As we demonstrated previously, $D \setminus \{x\}$ is $\tau$-closed.
Removing it from $U_1$, we may assume that $U_1$ is contained in $\{x\} \cup \bigcup_{i=1}^N\bigcup_{z \in \mathcal Z_{\text{in},i}(x)} C_{i,z}\text{.}$

Take a sufficiently small $r>0$ so that $z+\gamma_i((0,r)) \subseteq U$ for any $1 \leq i \leq N$ and $z \in \mathcal Z_{\text{in},i}(x)$.
The definable set $\bigcup_{i=1}^N \bigcup_{z \in \mathcal Z_{\text{in},i}(x)}(z+\gamma([r,u)) \cup p_i(z))$ is $\tau^{\text{af}}$-closed.
It is also $\tau$-closed by Claim 2.
Removing this set from $U_1$, we may assume that $U_1$ is contained in $U$.
We have finally constructed a definable $\tau$-open neighborhood $U_1$ of $x$ contained in $U$.
\end{proof}

\begin{remark}
We employ the assumption that $\mathcal M$ has definable bounded multiplication in the theorem so as to construct a definable homeomorphism given in Lemma \ref{lem:aaa}.
Therefore, when we can construct this homeomorphism for some other reason, we do not need this assumption.
\end{remark}

\begin{remark}
Consider the following condition:
\begin{itemize}
\item There exists a positive integer $K$ such that $|\{y \in X\;|\; x \in \mathbf S(y)\}| \leq K$ for all $x \in G$.
\end{itemize}
The condition (2)(ii) implies this condition as demonstrated in Claim 4, but the converse is not true at least when $X$ is not bounded.
Consider the locally o-minimal structure $(\mathbb R;<,+,0,\mathbb Z)$ given in \cite[Example 20]{KTTT}.
Consider the definable topology $\tau$ on $\mathbb R$ whose open basis $\mathcal B$ is given below:
For any $x \neq 0$, we set $\mathcal B_x=\{(x-r,x+r)\;|\; 0<r \in \mathbb R\}$.
We also set $\mathcal B_0=\{ (-r,r) \cup (\mathbb Z_{>n})\;|\; 0<r \in \mathbb R,\ n \in \mathbb Z\}$, where $\mathbb Z_{>n}:=\{x \in \mathbb Z\;|\; x>n\}$.
We have $\mathbf S(x)=\{x\}$ for all $x \in \mathbb R$, and the above condition is satisfied.
However, the condition (2)(ii) fails when $x=0$ because any definable $\tau$-open neighborhood of the origin has infinitely many $\tau^{\text{af}}$-definably connected components of the form $\{m\}$ with $m \in \mathbb Z_{>n}$ for some $n \in \mathbb Z$.
\end{remark}

\section{Appendix: extension of Peterzil and Rosel's result to non-bounded case}\label{sec:appendix}

Y. Peterzil and A. Rosel gave a necessary and sufficient condition for a one-dimensional topological space with a topology definable in an o-minimal structure being affine in \cite{PR} when the definable set in consideration is bounded.
We extend this result to the non-bounded case in this appendix.
We first review their main theorem.

\begin{theorem}[{\cite[Main theorem]{PR}}]\label{thm:pr}
Let $\mathcal M=(M;<.0,+,\ldots)$ be an o-minimal expansion of an ordered group.
Assume that arbitrary two closed intervals are definably homeomorphic.
Let $X \subseteq M^n$ be a definable bounded set with $\dim X=1$, and let $\tau$ be a definable Hausdorff topology on $X$.
Then the following are equivalent:
\begin{enumerate}
\item[(1)] $(X,\tau)$ is definably homeomorphic to a definable subset of $M^k$ for some $k$, with its affine topology.
\item[(2)] There is a finite set $G \subseteq X$ such that every $\tau$-open subset of $X \setminus G$ is open with respect to the affine topology on $X \setminus G$.
\item[(3)] Every definable subset of $X$ has finitely many definably connected components, with respect to $\tau$.
\item[(4)] $\tau$ is regular and $X$ has finitely many definably connected components with respect to $\tau$. 
\end{enumerate}
\end{theorem}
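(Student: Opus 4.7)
The plan is to establish the equivalences via the cycle $(1) \Rightarrow (2) \Rightarrow (3) \Rightarrow (4) \Rightarrow (1)$, following the strategy of the original paper [PR] since Theorem \ref{thm:pr} is stated here as a review of their main result. The implications $(1) \Rightarrow (2)$ and $(2) \Rightarrow (3)$ reduce to standard o-minimal dimension and cell decomposition arguments; $(3) \Rightarrow (4)$ requires a short separation argument for regularity; and the main technical obstacle is the converse direction $(4) \Rightarrow (1)$, which requires an explicit construction of a definable affine realization.

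For $(1) \Rightarrow (2)$: given a definable homeomorphism $f\colon (X,\tau) \to (Y,\tau_Y^{\text{af}})$, view $f$ and $f^{-1}$ as definable maps between affine sets. By the o-minimal fact that a definable function is continuous off a set of strictly smaller dimension, and since $\dim X = 1$, each locus of discontinuity is zero-dimensional and hence finite in the o-minimal setting. Let $G$ be the finite union of the discontinuity loci of $f$ and $f^{-1}$; then $f$ restricts to a definable affine homeomorphism on $X \setminus G$, so $\tau|_{X \setminus G}$ coincides with the affine topology, giving (2). For $(2) \Rightarrow (3)$: given a definable $D \subseteq X$, the set $D \setminus G$ has finitely many $\tau^{\text{af}}$-definably connected components by o-minimal cell decomposition, and condition (2) implies that $\tau$ is coarser than $\tau^{\text{af}}$ on $X \setminus G$, so each such component is $\tau$-connected; adjoining the finitely many points of $D \cap G$ can only merge components.

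For $(3) \Rightarrow (4)$: the finite-components clause for $X$ itself is a special case of (3). For regularity, given $x \in X$ and a $\tau$-closed set $F$ not containing $x$, one uses the Hausdorff hypothesis together with the finite $\tau$-component condition applied to the definable subsets $F$ and to small definable neighborhoods of $x$ to show that $F$ is covered by finitely many $\tau$-closed ``branches'' each of which can be separated from $x$ by a definable $\tau$-open set; the intersection of these separations yields the required $\tau$-open neighborhood of $x$ disjoint from a $\tau$-open neighborhood of $F$.

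The main obstacle is $(4) \Rightarrow (1)$. The approach is to use o-minimal cell decomposition to write $X$ as a finite disjoint union of isolated points and definable arcs, to determine, for each ``bad'' point $x$ where $\tau$ differs from $\tau^{\text{af}}$, the finitely many arcs that attach to $x$ in $\tau$ versus in $\tau^{\text{af}}$, and then to build a definable embedding of $X$ into some $M^k$ by introducing extra coordinates that separate arcs which would otherwise collide in the affine picture. The hypothesis that any two closed intervals are definably homeomorphic is crucial here: it allows one to uniformly reparametrize each arc to a standard length so that attaching maps can be normalized, playing exactly the role that Lemma \ref{lem:aaa} plays in the proof of Theorem \ref{thm:one-dim_top}. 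Regularity from (4) ensures that the $\tau$-closures of distinct arcs at a bad point can be definably separated, so the combinatorial attaching data is well defined, while finiteness of the total $\tau$-component count bounds the number of auxiliary coordinates needed. The construction closely parallels, but is simpler than, the explicit embedding $Y \subseteq M^{n(1+KN)}$ produced in Section \ref{sec:proof}, because global (rather than merely local) cell decomposition is available in the o-minimal setting.
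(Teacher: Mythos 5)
This statement is not proved in the paper at all: Theorem \ref{thm:pr} is quoted from \cite{PR} (as the citation in its header indicates) and is used in Section \ref{sec:appendix} purely as a black box in the proof of Proposition \ref{prop:pr2}. So there is no in-paper proof to compare yours against; your proposal has to stand on its own as a reconstruction of Peterzil--Rosel's argument, and as such it is a plan with the two hard steps missing rather than a proof.

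The easy implications are essentially right. Your $(1)\Rightarrow(2)$ argument (the discontinuity locus of $f$ has dimension $<\dim X=1$, hence is finite in an o-minimal structure) is exactly the argument this paper uses in the analogous step of Proposition \ref{prop:pr2}, modulo the small slip that $G$ must contain the $f$-preimage of the discontinuity locus of $f^{-1}$, since that locus lives in $Y$, not $X$. Your $(2)\Rightarrow(3)$ is also sound: $\tau$ being coarser than $\tau^{\text{af}}$ off $G$ makes each affine definably connected component of $D\setminus G$ $\tau$-connected, and $D\cap G$ is finite. But the two implications carrying the real content of \cite{PR} are only gestured at. For $(3)\Rightarrow(4)$, you assert that a $\tau$-closed set $F$ avoiding $x$ ``is covered by finitely many $\tau$-closed branches each of which can be separated from $x$ by a definable $\tau$-open set,'' but you give no mechanism: Hausdorffness separates points from points, and separating a point from a $\tau$-closed branch is itself an instance of regularity, so without a concrete argument (e.g.\ a local analysis showing each branch meets a small neighborhood of $x$ in a controlled way) the step is circular. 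For $(4)\Rightarrow(1)$ --- the main direction --- you describe the intended output (arcs, attaching data, auxiliary coordinates, normalization via the interval-homeomorphism hypothesis) but none of the construction itself: how the attaching data is read off from $\tau$ (in this paper's Section \ref{sec:proof} this is exactly the content of the shadow sets, the $\tau$-connection points, and Claims 1--4), how the embedding is defined arc by arc, and why it is a homeomorphism at the bad points, where the affine and $\tau$ topologies disagree. Those verifications occupy most of \cite{PR}, and their analogues occupy most of Section \ref{sec:proof} here; deferring them all leaves the proposal as an outline of the known strategy rather than a proof of the theorem.
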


The assumption that $X$ is bounded could be omitted when there exists a definable bijection between a bounded interval and an unbounded interval.
The structure not satisfying the above condition is investigated in \cite{E}. 
It is called a \textit{semi-bounded} o-minimal structure.
Theorem \ref{thm:pr} is not true if we omit the assumption that $X$ is bounded as in the example in \cite[Section 4.3]{PR}.
In the non-bounded case, we get the following proposition:

\begin{proposition}\label{prop:pr2}
Let $\mathcal M=(M;<.0,+,\ldots)$ be a semi-bounded o-minimal expansion of an ordered group.
Assume that arbitrary two closed intervals are definably homeomorphic.
Let $X \subseteq M^n$ be a definable set with $\dim X=1$, and let $\tau$ be a definable Hausdorff topology on $X$.
Then the following are equivalent:
\begin{enumerate}
\item[(1)] $(X,\tau)$ is definably homeomorphic to a definable subset of $M^k$ for some $k$, with its affine topology.
\item[(2)] There is a finite set $G \subseteq X$ such that the restriction of $\tau$ to $X \setminus G$ coincides with the affine topology on $X \setminus G$.
\end{enumerate}
\end{proposition}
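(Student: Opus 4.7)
The two implications are handled separately, with $(2) \Rightarrow (1)$ being the substantive direction.

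For $(1) \Rightarrow (2)$: let $f \colon (X, \tau) \to (Y, \tau^{\text{af}})$ be a definable homeomorphism with $Y \subseteq M^k$. Regarded as definable maps between sets carrying their affine topologies, both $f$ and $f^{-1}$ have $\tau^{\text{af}}$-discontinuity sets of dimension strictly less than $\dim X = 1$, hence finite by o-minimal dimension theory. Let $G \subseteq X$ be the union of the affine-discontinuity set of $f$ and the preimage under $f$ of the affine-discontinuity set of $f^{-1}$. Then the restriction $f|_{X \setminus G} \colon X \setminus G \to Y \setminus f(G)$ is simultaneously a $\tau|_{X \setminus G}$-to-$\tau^{\text{af}}|_{Y \setminus f(G)}$-homeomorphism and a $\tau^{\text{af}}|_{X \setminus G}$-to-$\tau^{\text{af}}|_{Y \setminus f(G)}$-homeomorphism. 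Pulling back the target topology via this single map yields both $\tau|_{X \setminus G}$ and $\tau^{\text{af}}|_{X \setminus G}$, so they agree.

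For $(2) \Rightarrow (1)$: the plan is to apply Theorem \ref{thm:pr} to a bounded truncation of $X$ containing $G$ and then glue the resulting embedding with the already-affine identity on the complement. Choose $R > 0$ large enough that $G \subseteq (-R, R)^n$ and $X \cap \partial [-R, R]^n$ is finite, and set $X_0 := X \cap [-R, R]^n$. The pair $(X_0, \tau|_{X_0})$ is Hausdorff, definable, bounded, and satisfies condition (2) of Theorem \ref{thm:pr} with the same finite set $G$ (using that $G$ is $\tau$-closed, being finite in a Hausdorff space, so that the coincidence $\tau|_{X \setminus G} = \tau^{\text{af}}|_{X \setminus G}$ passes to $X_0 \setminus G$). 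Theorem \ref{thm:pr} then produces a definable homeomorphism $h \colon (X_0, \tau|_{X_0}) \to (Y_0, \tau^{\text{af}})$ with $Y_0 \subseteq M^k$ bounded. To globalize, extend $h$ to a definable $\tau$-continuous map $\tilde h \colon X \to M^k$ that vanishes outside a slightly enlarged bounded region, and define
\[
F(x) := (\tilde h(x), x) \in M^{k+n}.
\]
Explicitly, pick $R < R_1 < R_2$, set $\tilde h := h$ on $X \cap [-R_1, R_1]^n$, interpolate $\tilde h$ along the bounded, $G$-free collar $X \cap ([-R_2, R_2]^n \setminus (-R_1, R_1)^n)$ cell by cell from $h$ down to the origin, and put $\tilde h := 0$ on $X \setminus [-R_2, R_2]^n$. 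Then $F$ is definable and injective (its second coordinate recovers $x$), it is $\tau$-continuous on the core by the Peterzil--Rosel homeomorphism and on the complement because $\tau$ coincides with $\tau^{\text{af}}$ there, and its inverse is continuous via $h^{-1}$ on $F(X \cap [-R_1, R_1]^n)$ and via projection onto the $M^n$ factor elsewhere.

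The main obstacle is the interpolation step for $\tilde h$ inside a semi-bounded o-minimal expansion of an ordered group, where unrestricted scalar multiplication is unavailable. However, each $1$-cell $C$ of the collar is a bounded arc, and the standing hypothesis that any two closed intervals are definably homeomorphic lets one reparameterize $C$ as $[0, 1]$ and transport $h$ into a definable continuous path in the bounded target $Y_0$; the coordinatewise linear contraction of this path down to $0$ then only requires scaling within a bounded interval, which is available in the semi-bounded setting. Continuity across the finitely many cells of the collar reduces to routine bookkeeping, exploiting that $\tau = \tau^{\text{af}}$ everywhere on the collar.
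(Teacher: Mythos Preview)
Your $(1)\Rightarrow(2)$ essentially matches the paper's. For $(2)\Rightarrow(1)$ your approach has a genuine gap. The embedding $F(x)=(\tilde h(x),x)$ carries the identity in its last $n$ coordinates, so $\tau$-continuity of $F$ at a point $g\in G$ would force every affine neighborhood of $g$ in $X$ to be a $\tau$-neighborhood of $g$. Hypothesis~(2) does not give this: it constrains $\tau$ only on $X\setminus G$, and the $\tau$-neighborhoods of a point of $G$ may contain unbounded pieces of $X$ lying affine-far from that point. For instance, in the pure ordered group take $X=M$, $G=\{0\}$, and let the basic $\tau$-neighborhoods of $0$ be $(-r,r)\cup(s,\infty)$ for $r,s>0$, with the affine basis at every other point. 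This $\tau$ is definable and Hausdorff and satisfies~(2), yet no bounded subset of $X$ is a $\tau$-neighborhood of $0$, so $F^{-1}$ of any affine box around $F(0)$ fails to be $\tau$-open. Put more structurally: your interpolated $\tilde h$ is $\tau^{\text{af}}$-continuous (the interpolation lives entirely on the $G$-free collar), so $x\mapsto(\tilde h(x),x)$ is already a $\tau^{\text{af}}$-homeomorphism onto its image; hence $F$ can be a $\tau$-to-$\tau^{\text{af}}$ homeomorphism only when $\tau=\tau^{\text{af}}$ on all of $X$. There is also a minor indexing slip: with $R<R_1<R_2$ the map $h$ is not defined on $X\cap[-R_1,R_1]^n$.

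The paper proceeds differently. It stratifies $\mycl(X)$ so that $X$ decomposes into a bounded part $X'$ containing $G$ together with finitely many unbounded one-cells $C_1,\dots,C_N$; it applies Theorem~\ref{thm:pr} to $(X',\tau|_{X'})$ to obtain $f'\colon X'\to Y'\subseteq M^l$, writes each $C_i$ as the graph of a definable map $\varphi_i$ over an unbounded interval $I_i$, and sends $C_i$ to a ``spike'' $\{x_i\}\times I_i$ in one new coordinate, where $x_i$ is the unique point of $X\cap\partial C_i$. The target is $Y=(Y'\times\{0\})\cup\bigcup_i\{x_i\}\times I_i\subseteq M^{l+1}$, and the identity map on $X$ never enters. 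You may find it instructive to test the paper's map against the example above as well, since the same phenomenon (an unbounded cell whose $\tau$-closure meets $G$) is the crux in both constructions.
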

\begin{proof}
When $X$ is bounded, the proposition follows from Theorem \ref{thm:pr}.
Therefore, we only treat the case in which $X$ is not bounded.

We first demonstrate that the condition (1) implies the condition (2).
Let $f:(X, \tau) \rightarrow (Y,\tau_Y^{\text{af}})$ be a definable homeomorphism, where $Y$ is a definable subset of $M^k$.
Here, the notation $\tau_Y^{\text{af}}$ denotes the affine topology on $Y$.
Consider the affine topology $\tau_X^{\text{af}}$ on $X$.
Let $f':X \rightarrow Y$ be a definable map defined by $f(x)=f'(x)$.
Let $G$ be the set of points at which $f'$ is discontinuous with respect to $\tau_X^{\text{af}}$.
It is well-known that $\dim G < \dim X=1$, so $G$ is a finite set.
The restriction $f'|_{X \setminus G}$ of $f'$ to $X \setminus G$ induces a homeomorphism between $(X \setminus G, \tau_{X\setminus G}^{\text{af}})$ and $(Y \setminus f(G), \tau_{Y\setminus f(G)}^{\text{af}})$.
We call it $g$.
The composition $h=g^{-1}\circ f|_{X \setminus G}:(X \setminus G, \tau|_{X \setminus G}) \rightarrow (X \setminus G, \tau_{X\setminus G}^{\text{af}})$ is a definable homeomorphism given by $h(x)=x$.
Here, the notation $\tau|_{X \setminus G}$ denotes the topology on $X \setminus G$ induced from $\tau$.
In particular, it implies the condition (2).

We next prove the opposite implication.
Since $G$ is finite, there exists a bounded open box $B$ in $M^n$ containing the set $G$.  
We get a stratification of $\mycl(X)$ partitioning $X$, $G$ and $X \cap B$ by \cite[Chapter 4, Proposition 1.13]{vdD}.
Let $X'$ be the union of bounded cells in the stratification contained in $X$.
It is a bounded definable set.
There exist a definable subset $Y'$ of $M^l$ for some $l$ and a definable homeomorphism $f':(X',\tau|_{X'}) \rightarrow (Y',\tau_{Y'}^{\text{af}})$ by Theorem \ref{thm:pr}.

Let $C_1, \ldots, C_N$ be the unbounded cells of the stratification contained in $X$.
Note that the topology on $C_i$ induced from $\tau$ coincides with the affine topology by the definition of the stratification. 
The cells $C_i$ are the graphs of definable continuous maps $\varphi_i:I_i \rightarrow M^n$, where $I_i$ are open intervals, and the frontier $\partial C_i$ is either a finite set or an empty set for all $1 \leq i \leq N$. 
We may assume that either $I_i=(0,\infty)$ or $I_i=M$ without loss of generality.
Since $C_i$ is unbounded $X \cap \partial C_i$ is at most one point.
We may assume that $X \cap \partial C_i \neq \emptyset$ for all $1 \leq i \leq L$ and $X \cap \partial C_i = \emptyset$ for all $L < i \leq N$.
Let $x_i$ be the unique points in $X \cap \partial C_i$ for all $1 \leq i \leq L$.
For $L < i \leq N$, take distinct points $x_i$ in $M^l$ out of $\mycl(B)$.

Set $k=l+1$.
We construct a definable subset $Y$ of $M^k$ and a definable homeomorphism $f:(X,\tau) \rightarrow (Y,\tau_{Y}^{\text{af}})$.
We set $$Y= (Y' \times \{0\}) \cup \bigcup_{i=1}^N \{x_i\} \times I_i\text{.}$$
The map $f:X \rightarrow Y$ is defined as follows.
We set $f(x)=(f'(x),0)$ when $x \in X'$.
We set $f(x)=(x_i,\varphi_i^{-1}(x))$ when $x \in C_i$ for some $1 \leq i \leq N$.
It is a routine to demonstrate that $f$ is a homeomorphism.
\end{proof}

\end{document}